\documentclass[10pt]{article}
\usepackage[letterpaper,margin=1in]{geometry}
\usepackage{amsmath,amssymb,amsthm}
\newtheorem{proposition}{Proposition}
\DeclareMathOperator{\Spec}{Spec}
\title{An endpoint exception to a conjectured equality characterization\\
for Laplacian eigenvalue products}
\author{Scott Gibson}
\date{September 6, 2026}
\begin{document}
\maketitle
\begin{abstract}
We give eight-vertex counterexamples to the equality characterization in Conjecture 20 of Chen, Guo,
Li and Wang, Electron. J. Combin. 32(4) (2025), P4.37. The examples occur at the included endpoint
$k=3n/4$ and extend to infinite families, including a family for which both the graph and its complement
are connected. These examples do not contradict the numerical inequality in that conjecture.
\end{abstract}

Let $G$ be a finite simple graph of order $n$, and write the eigenvalues of its Laplacian $L(G)=D(G)-A(G)$
as $\mu_1(G)\ge\cdots\ge\mu_n(G)=0$. Conjecture 20 of \cite{chen}, on printed page 11, asserts that
\begin{equation}\label{ineq}
\mu_k(G)\mu_k(\overline G)\le n(n-k),\qquad 1\le k\le 3n/4,
\end{equation}
and characterizes equality by requiring $G$ or $\overline G$ to be a join $K_k\vee H$, where $H$ has $n-k$ vertices and at least
$k+1$ connected components. Here $\vee$ denotes the graph join. The following elementary examples contradict
the necessary direction of that characterization. The graph families and spectral ingredients are standard;
in particular, the barbell characteristic-polynomial calculation appears in the proof of Lemma 20 of Li and
Guo \cite{li}. The point here is their application to the endpoint equality claim in \cite{chen}.

\begin{proposition}
For every integer $m\ge2$, the graph $G=K_{2m,2m}$ attains equality in \eqref{ineq} at $n=4m$, $k=3m$,
although neither $G$ nor $\overline G$ has the prescribed form.
\end{proposition}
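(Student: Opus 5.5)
We first compute the two Laplacian spectra and then read off the $k$-th eigenvalues. For $G=K_{2m,2m}$ the Laplacian spectrum is standard. By the join formula, $K_{a,b}=\overline{K_a}\vee\overline{K_b}$, so
\[
\Spec L(K_{2m,2m})=\{4m^{(1)},\,2m^{(4m-2)},\,0^{(1)}\}.
\]
The complement is $\overline G=2K_{2m}$, two disjoint copies of $K_{2m}$, with
\[
\Spec L(\overline G)=\{2m^{(2(2m-1))},\,0^{(2)}\}=\{2m^{(4m-2)},0^{(2)}\}.
\]
With $n=4m$ and $k=3m$, the nonincreasing orderings give $\mu_{3m}(G)=2m$, since positions $2,\dots,4m-1$ of $G$ all equal $2m$ and $2\le 3m\le 4m-1$ for $m\ge1$. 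They also give $\mu_{3m}(\overline G)=2m$, since positions $1,\dots,4m-2$ equal $2m$ and $3m\le 4m-2$ exactly when $m\ge2$; this is where the hypothesis $m\ge2$ enters. Hence
\[
\mu_k(G)\mu_k(\overline G)=4m^2=4m(4m-3m)=n(n-k),
\]
so equality holds in \eqref{ineq} at the included endpoint $k=3n/4$.

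Next we show that neither $G$ nor $\overline G$ has the form $K_k\vee H$ with $|H|=n-k=m$ and $H$ having at least $k+1=3m+1$ components. This is immediate by counting: $H$ has only $m$ vertices, so it has at most $m<3m+1$ components. Thus no graph on $4m$ vertices can have the prescribed form at $k=3m$ at all, and the condition fails for both $G$ and $\overline G$.

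As a sanity check that we are not misreading the conjecture, note that the same count shows the prescribed structure is impossible whenever $n-k<k+1$, that is, whenever $k>(n-1)/2$. So the claim only has content for small $k$, and the endpoint is genuinely outside it. We could also argue structurally: $G$ contains no triangle, so it cannot contain $K_{3m}$ with $3m\ge 3$; and $\overline G$ has clique number $2m<3m$. Either argument excludes a $K_{3m}$ join factor.

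The main obstacle is only the bookkeeping of multiplicities in the ordering, specifically checking that index $3m$ falls inside the block of eigenvalues equal to $2m$ for both spectra. For $\overline G$ this requires $m\ge2$: when $m=1$ the $2m$-block has length $4m-2=2<3=3m$, so $\mu_3(\overline G)=0$ and equality fails, consistent with the hypothesis. Everything else is a standard spectral computation.
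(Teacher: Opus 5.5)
Your proof is correct. The spectral half matches the paper's: the spectra of $K_{2m,2m}$ and $2K_{2m}$, with $m\ge2$ entering through $3m\le 4m-2$.

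Where you differ is in excluding the prescribed form. The paper argues from the particular graphs. Every vertex of the $K_k$ factor of $K_k\vee H$ is universal, whereas $G$ and $\overline G$ are regular of degrees $2m$ and $2m-1$, both below $n-1$.

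Your main argument instead notes that $H$ would need at least $3m+1$ components on only $m$ vertices. So no graph of order $4m$ has the form at $k=3m$, and more generally none does whenever $k>(n-1)/2$. This is more general and more revealing. It shows the characterization, as stated, predicts no equality cases at the endpoint at all, so every equality case there is a counterexample. That fits the paper's Scope remark that equality at $k=3n/4$ holds exactly when both $k$th eigenvalues equal $n/2$.

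The paper's universal-vertex argument buys independence from the exact condition on $H$. Your backup clique-number argument has the same property: $K_k\vee H$ contains $K_k$, while $G$ and $\overline G$ have clique numbers $2$ and $2m<3m$. Either of these rules out $G$ or $\overline G$ being any join $K_k\vee H$. The conclusion therefore survives even if the component condition in the original conjecture differs from the paraphrase, whereas your counting argument depends on that paraphrase being exact.
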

\begin{proof}
Put $s=2m$. The complement of $K_{s,s}$ is $K_s\cup K_s$. Their Laplacian spectra are
\[
\Spec_L(K_{s,s})=\{2s,s^{[2s-2]},0\},\qquad
\Spec_L(K_s\cup K_s)=\{s^{[2s-2]},0^{[2]}\},
\]
where brackets denote multiplicity. Since $3m\le4m-2$, both $k$th eigenvalues equal $s$, and
\[
\mu_k(G)\mu_k(\overline G)=s^2=4m^2=n(n-k).
\]
Every vertex of the $K_k$ part of $K_k\vee H$ is universal. The degrees in $G$ and $\overline G$ are respectively $s$ and $s-1$,
both strictly less than $n-1=2s-1$. Thus neither graph is of the required form.
\end{proof}

In particular, $K_{4,4}$ gives an eight-vertex counterexample at $k=6$:
\[
\mu_6(K_{4,4})\mu_6(K_4\cup K_4)=4\cdot4=16=8(8-6).
\]
The counterexample persists when connectivity is required for both graphs.

\begin{proposition}
For every integer $m\ge2$, let $F$ be obtained from $K_{2m,2m}$ by deleting one edge. Then $F$ and
$\overline F$ are connected, attain equality in \eqref{ineq} at $n=4m$, $k=3m$, and neither has the prescribed form.
\end{proposition}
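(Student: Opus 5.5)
The plan is to reduce everything to interlacing for Laplacian eigenvalues under the addition or deletion of a single edge, starting from the spectra computed in the previous proposition. Write $s=2m$, $n=2s=4m$, $k=3m$, and let $uv$ be the deleted edge, with $u$ in part $A$ and $v$ in part $B$ of $K_{s,s}$. Then $\overline F$ is $K_s\cup K_s$ (on $A$ and on $B$) together with the single edge $uv$; this is the barbell graph. I will use the standard fact that if $G'=G+e$, then
\[
\mu_1(G')\ge\mu_1(G)\ge\mu_2(G')\ge\mu_2(G)\ge\cdots\ge\mu_n(G')\ge\mu_n(G).
\]
Alternatively, the barbell spectrum could be read off from the characteristic-polynomial calculation in Lemma 20 of \cite{li}.

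First I treat $F$, applying interlacing with $G'=K_{s,s}$ and $G=F$. For $i\ge2$, $\mu_i(F)\le\mu_i(K_{s,s})=s$. Also $\mu_i(F)\ge\mu_{i+1}(K_{s,s})=s$ whenever $i+1\le 2s-1$, that is, $i\le 4m-2$. Hence $\mu_i(F)=s$ for $2\le i\le 4m-2$.

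Next I treat $\overline F$, applying interlacing with $G'=\overline F$ and $G=K_s\cup K_s$. We have $\mu_i(\overline F)\ge\mu_i(K_s\cup K_s)=s$ for $i\le 2s-2$. We also have $\mu_i(\overline F)\le\mu_{i-1}(K_s\cup K_s)=s$ for $2\le i\le 2s-1$. Hence $\mu_i(\overline F)=s$ for $2\le i\le 4m-2$. Since $m\ge2$ gives $2\le 3m\le 4m-2$, we get
\[
\mu_k(F)\mu_k(\overline F)=s^2=4m^2=n(n-k).
\]
As a sanity check, the $s$-eigenspace of $F$ can be exhibited directly. It contains the vectors supported on one side with zero sum and vanishing at the endpoint of $uv$, which gives multiplicity at least $2s-4$. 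Interlacing is what guarantees that the index $k=3m$ actually falls in this block even when $m=2,3$. Placing $k$ correctly in the spectrum is the only delicate point.

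Connectivity is immediate. $F$ is $K_{s,s}$ minus one edge with $s\ge4$, so every vertex of $A$ still has a neighbour in $B$ and every two vertices of $A$ share a common neighbour. The graph $\overline F$ consists of two connected cliques joined by $uv$. Finally, as in the previous proof, any graph of the form $K_k\vee H$ with $k\ge1$ has a vertex of degree $n-1=2s-1$. The maximum degree of $F$ is $s$. The maximum degree of $\overline F$ is also $s$, attained at $u$ and $v$. Since $s<2s-1$ for $s\ge2$, neither $F$ nor $\overline F$ has the prescribed form.
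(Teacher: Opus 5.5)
Your proof is correct, but it reaches the key spectral fact by a different route from the paper's.

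The paper computes the spectrum of the barbell $\overline F$ exactly. It uses the four-cell equitable partition and splits the quotient matrix by the clique-exchange symmetry. This gives the outlying eigenvalues $\frac{s+2\pm\sqrt\Delta}{2}$ with $\Delta=s^2+4s-4$. It then transfers these to $F$ through $L(F)+L(\overline F)=2sI-J$. Finally it checks the ordering via $(s-2)^2<\Delta<(s+2)^2$ and $\Delta<(3s-2)^2$, to see that $s$ occupies positions $2$ through $2s-2$.

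You avoid all of this with the rank-one edge interlacing $L(G+uv)=L(G)+(e_u-e_v)(e_u-e_v)^{T}$, applied to the spectra already found in Proposition 1. Squeezing $F$ below $K_{s,s}$, and $\overline F$ above $K_s\cup K_s$, forces $\mu_i=s$ for $2\le i\le 2s-2$ in both graphs. You never need the outlying eigenvalues, never order surds, and never use the complement identity.

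What each route buys: yours is shorter and less error-prone. The paper's gives more information: the exact multiplicity $2s-3$, closed forms for $\mu_1$ and the algebraic connectivity of both graphs, and the explicit characteristic polynomials it records for $n=8$.

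One small completion is needed in your connectivity argument for $F$. You should also note that every vertex of $B$ has a neighbour in $A$ (by symmetry). As written, you have only shown that $A$ lies in a single component.
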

\begin{proof}
Again put $s=2m$. The graph $B=\overline F$ consists of two copies of $K_s$ joined by one edge. Its characteristic
polynomial is the equal-clique specialization of the polynomial calculation in \cite[proof of Lemma 20]{li};
the lemma's strict inequality assumes unequal cliques, but that determinant calculation does not. For completeness,
we give a direct derivation. Partition the vertices into the first bridge endpoint, the other $s-1$
vertices of its clique, the second endpoint, and the other $s-1$ vertices of the second clique.

Vectors supported on either set of non-endpoints and summing to zero give the eigenvalue $s$ with total
multiplicity $2s-4$. On vectors constant on each of the four cells, the Laplacian has quotient matrix
\[
Q=\begin{pmatrix}
s&-(s-1)&-1&0\\
-1&1&0&0\\
-1&0&s&-(s-1)\\
0&0&-1&1
\end{pmatrix}.
\]
The symmetric and antisymmetric subspaces under exchange of the two cliques reduce $Q$, respectively, to
\[
\begin{pmatrix}s-1&-(s-1)\\-1&1\end{pmatrix},\qquad
\begin{pmatrix}s+1&-(s-1)\\-1&1\end{pmatrix}.
\]
Their characteristic polynomials are $t(t-s)$ and $t^2-(s+2)t+2$. With $\Delta=s^2+4s-4$, this gives
\[
\Spec_L(B)=\left\{\frac{s+2+\sqrt\Delta}{2},\ s^{[2s-3]},\ \frac{s+2-\sqrt\Delta}{2},\ 0\right\}.
\]
Using $L(F)+L(B)=2sI-J$ on the subspace orthogonal to the all-ones vector gives
\[
\Spec_L(F)=\left\{\frac{3s-2+\sqrt\Delta}{2},\ s^{[2s-3]},\ \frac{3s-2-\sqrt\Delta}{2},\ 0\right\}.
\]
Both displays are in descending order. Indeed, for $s\ge4$,
\[
(s-2)^2<\Delta<(s+2)^2,\qquad \Delta<(3s-2)^2.
\]
Thus the eigenvalue $s$ occupies positions $2$ through $2s-2$ in both spectra. As $k=3m\le4m-2=2s-2$,
the product is again $s^2=n(n-k)$. Both graphs are connected, and their maximum degrees are $s<2s-1$,
so neither has a universal vertex.
\end{proof}

For the eight-vertex connected pair, the characteristic polynomials simplify to
\begin{align*}
\det(tI-L(F))&=t(t-4)^5(t^2-10t+18),\\
\det(tI-L(\overline F))&=t(t-4)^5(t^2-6t+2).
\end{align*}

\paragraph{Scope.}
Theorem 18 of \cite{chen} already proves \eqref{ineq} for $n/2\le k\le3n/4$. For $k\ge n/2$, eigenvalue ordering and
the complement identity imply
\[
\mu_k(G)+\mu_k(\overline G)\le n,\qquad
\mu_k(G)\mu_k(\overline G)\le n^2/4.
\]
Consequently equality in \eqref{ineq} is impossible when $n/2\le k<3n/4$. At $k=3n/4$, it holds precisely when both
$k$th eigenvalues are $n/2$. The examples above expose a missing endpoint case in the structural characterization;
they leave the inequality for $k<n/2$ unresolved.

\paragraph{Computational verification and AI assistance.}
ChatGPT (OpenAI) was used in finding the examples,
developing the arguments, and drafting this note. The eight-vertex characteristic polynomials were checked
using SymPy and a separate exact-rational determinant verifier, supplied as an ancillary file. The proofs
above are self-contained and do not require numerical eigenvalue calculations.

\end{document}